\theoremstyle{plain}
\newtheorem{defn}[subsection]{Definition}
\newtheorem{thm}[subsection]{Theorem}
\newtheorem{lem}[subsection]{Lemma}
\newtheorem{note}[subsection]{Note}
\newtheorem{rem}[subsection]{Remark}
\journal{Journal of Algebra}
\begin{document}

\begin{frontmatter}


\title{Hilbert-Samuel Multiplicity of a Bipartite Graph\tnoteref{}}
\author{Priya Das\fnref{label2}}
\ead{priya.math88@gmail.com}
\author{Himadri Mukherjee}
\ead{himadri.mukhujje@gmail.com}
\address{IISER Kolkata, Mohanpur Campus}
\fntext[label2]{The author thanks UGC for finantial support}




\begin{abstract}
 Let $G$ be a bipartite graph and $I(G)$ the toric ideal associated to the graph $G$. In this article
we calculate Hilbert-Samuel multiplicity of the graph $G$ for which the toric ideal $I(G)$ is generated by a quadratic
binomials and it satiesfies some conditions.
\end{abstract}

\begin{keyword}
Bipartite Graphs \sep Toric Ideals \sep Hilbert-Samuel Multiplicity


\MSC05E40 \MSC14M25 \MSC13F20

\end{keyword}

\end{frontmatter}


\section{Introduction}
Let $K$ be a field and $G$ be a finite simple connected graph with vertex set $V(G)=\{1,2,...,d\}$ and edge set
$E(G)=\{e_1,e_2,...,e_n\}$. Let $K[x]=K[x_1,x_2,...,x_n]$ denote the polynomial ring in $n$-variables over $K$
and $K[t]=K[t_1,t_2,...,t_d]$ denote the polynomial ring in $d$-variables over the field $K$. For each edge $e=(i,j)$
where $i,j\in$ $V(G)$ we write $t^e:=t_it_j$. Let $K[G]$ denote the subalgebra of $K[t]$ which is generated by 
$t^{e_1},t^{e_2},...,t^{e_n}$ over $K$. Define $f: K[x]\rightarrow K[G]$ the surjective homomorphism by $f(x_i)= t^{e_i}$,
$1 \leq i \leq n$. Let $I(G)$ be the kernel of $f$. This ideal is studied by many author (ref. \cite{oh-hb},\cite{strm},
\cite{schu},\cite{him},\cite{brown},\cite{hibi}). Notably B. Strumfels related the binomial ideals with the toric varieties in
(\cite{strm}). In (ref.\cite{vil}) the authors calculated the edge cone of the affine toric variety $V(I(G))$ for a bipartite 
graph $G$. They also found the faces of the edge cone in terms of the edge vectors, which is given by the column vectors 
of the adjacency matrix of the bipartite graph. T. Hibi and H. Ohsugi (\cite{oh-hb}, Theorem 1.2) gives the following
conditions for the toric ideal $I(G)$ of a graph to be generated by quadratic binomials:

\begin{thm} 
\label{cond}
Let $G$ be a finite connected graph having no loops and no multiple edges. Then, the toric ideal $I(G)$ of $G$ is
generated by quadratic binomials if and only if the following conditions are satiesfied:
\begin{flushleft}
(i) If $C$ is an even cycle of $G$ of length $\geqslant$ $6$, then either $C$ has an even-chord or $C$ has three odd-chords 
$e,e^{'},e^{''}$ such that $e$ and $e^{'}$ cross in $C$;\\
(ii) If $C_1$ and $C_2$ are minimal odd cycles having exactly one common vertex, then there exists an edge
$\{i,j\}\notin E(C_1) \cup E(C_2)$ with $i\in V(C_1)$ and $j\in  V(C_2)$;\\
(iii) If $C_1$ and $C_2$ are minimal odd cycles having no common vertex, then there exist at least two bridges between
$C_1$ and $C_2$.
\end{flushleft}
\end{thm}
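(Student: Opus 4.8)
The plan is to derive the characterisation from two structural inputs about the toric ideal $I(G)$ of a graph. The first is that $I(G)$ is generated, as an ideal, by the binomials $f_{\Gamma}=x^{+}_{\Gamma}-x^{-}_{\Gamma}$ attached to the even closed walks $\Gamma$ of $G$, where $x^{+}_{\Gamma}$ (respectively $x^{-}_{\Gamma}$) is the product of the edge variables occupying the odd (respectively even) positions along $\Gamma$, and that $\deg f_{\Gamma}=\ell(\Gamma)/2$, half the length of $\Gamma$. Consequently $I(G)$ is generated by quadratic binomials if and only if every $f_{\Gamma}$ with $\ell(\Gamma)\ge 6$ lies in the ideal $Q$ generated by the quadratic binomials of $I(G)$, and these quadratics are essentially the $x_{e}x_{e'}-x_{g}x_{g'}$ coming from the $4$-cycles of $G$. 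The second input is that it suffices to test this on the \emph{primitive} even closed walks, together with their explicit classification: a primitive even closed walk is either (a) an even cycle, or (b) a walk $(C_1,C_2)$ with $C_1,C_2$ odd cycles meeting in exactly one vertex, or (c) a walk $(C_1,W_1,C_2,W_2)$ with $C_1,C_2$ vertex-disjoint odd cycles and $W_1,W_2$ two walks joining $V(C_1)$ to $V(C_2)$. I would first establish (or quote) these two facts, because the trichotomy (a)/(b)/(c) runs exactly parallel to the hypotheses (i)/(ii)/(iii).

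For sufficiency ($\Leftarrow$) I would induct on $\ell(\Gamma)$ to show $f_{\Gamma}\in Q$ for each primitive even closed walk of length $\ge 6$, splitting into the three cases. In case (a), a chord of the even cycle $C$ cuts it into two shorter closed walks: an even chord $e$ produces two shorter \emph{even} closed walks once $e$ is appended, and the standard identity writing $f_{C}$ as a $K[x]$-combination of $f_{\Gamma'}$ and $f_{\Gamma''}$ puts $f_C$ in $Q$ by induction, whereas a crossing pair of odd chords $e,e'$ together with the third odd chord $e''$ lets one carve $C$ into arcs and rewrite $f_C$ through strictly shorter even closed walks — this is the delicate bookkeeping. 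In case (b), the edge $\{i,j\}\notin E(C_1)\cup E(C_2)$ with $i\in V(C_1)$, $j\in V(C_2)$ supplied by (ii) reroutes $(C_1,C_2)$ through $\{i,j\}$, expressing $f_{(C_1,C_2)}$ as a $K[x]$-combination of two binomials attached to $4$-cycles (for triangles) or to shorter primitive walks in general. In case (c), the two bridges furnished by (iii) serve as $W_1$ and $W_2$, and one checks that the resulting degree-$\ge 3$ binomial factors through the two binomials built from $C_1$ with a bridge and from $C_2$ with a bridge, which fall under case (b) or are already quadratic. The base case $\ell(\Gamma)=4$ is immediate.

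For necessity ($\Rightarrow$) I would prove the contrapositive: if (i) fails for an even cycle $C$, or (ii) or (iii) fails for a pair $C_1,C_2$, then the associated primitive binomial $f_{\Gamma}$ has degree $\ge 3$ and is \emph{indispensable}, hence lies in every minimal binomial generating set of $I(G)$, so $I(G)$ cannot be generated by quadrics. The efficient route is to compute the fibre of $f$ over the monomial $f(x^{+}_{\Gamma})$ — the set of monomials of $K[x]$ congruent to $x^{+}_{\Gamma}$ modulo $I(G)$ — and to show that, when the relevant hypothesis fails, it contains \emph{only} the two monomials $x^{+}_{\Gamma}$ and $x^{-}_{\Gamma}$; a two-element fibre forces $f_{\Gamma}$ into every system of binomial generators. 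The absence of an even chord and of a crossing odd-chord triple (respectively of the connecting edge, respectively of a second bridge) is precisely what prevents a third edge-subset with the same vertex multiset from appearing in that fibre, so this is where the hypotheses are used; the verification is a finite but careful inspection of the shape of $\Gamma$.

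I expect the main obstacle to be the sufficiency argument in case (a) with three odd chords $e,e',e''$ where $e$ and $e'$ cross: no single chord breaks $C$ into even subwalks, so one must orchestrate the crossing pair and the extra odd chord simultaneously, track which edges land in even versus odd positions along each arc, and confirm that every intermediate binomial genuinely lies in $I(G)$ and has strictly smaller degree. A secondary, but still necessary, difficulty is making the classification of primitive even closed walks watertight, since the whole case analysis rests on it.
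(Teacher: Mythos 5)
First, a point of reference: the paper does not prove this statement at all. Theorem \ref{cond} is quoted verbatim from Ohsugi--Hibi \cite{oh-hb} (their Theorem 1.2) and is used as a black box, so there is no internal proof to compare against. Your outline does reconstruct the standard route of the original argument: $I(G)$ is generated by the binomials of even closed walks, one reduces to primitive walks, and the trichotomy (even cycle) / (two odd cycles meeting in one vertex) / (two disjoint odd cycles joined by two walks) is matched case by case against conditions (i)--(iii), with sufficiency done by induction on the length of the walk. That part of the plan is sound in outline, though the two hardest steps (the crossing odd-chord case of (a), and making the primitive-walk classification precise) are exactly the ones you defer.

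There is, however, a genuine gap in your necessity argument. You propose to show that when (i), (ii) or (iii) fails, the fibre of $f$ over $f(x^{+}_{\Gamma})$ consists of \emph{only} the two monomials $x^{+}_{\Gamma}$ and $x^{-}_{\Gamma}$, and to conclude indispensability from that. This cardinality claim is false in general. Take $C$ an $8$-cycle $1,2,\dots,8$ whose only chords are $\{1,4\}$ and $\{2,7\}$: both chords are odd and they cross, so $C$ has no even chord and fewer than three odd chords, i.e.\ condition (i) fails; yet the fibre of the vertex multiset $V(C)$ contains, besides the two alternating matchings $x_{12}x_{34}x_{56}x_{78}$ and $x_{23}x_{45}x_{67}x_{81}$, also $x_{14}x_{23}x_{56}x_{78}$ (and more), since the chords themselves are edges of $G$. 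So ``two-element fibre'' is not what failure of the hypotheses gives you. The correct necessity argument is a connectivity statement, not a counting statement: assuming $I(G)$ is generated by quadrics, the monomials $x^{+}_{\Gamma}$ and $x^{-}_{\Gamma}$ must be joined by a chain of moves along $4$-cycle binomials inside the fibre, and one must show that each such move forces the existence of an even chord, a crossing odd-chord triple, a connecting edge, or a second bridge, respectively. As written, your plan would let the necessity direction fail precisely in the configurations (chords or bridges present, but not of the right kind) that the theorem is designed to exclude.
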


Using the above conditions one can show that if $K[G]$ is Koszul then it is normal (see \cite{oh-hb}, corollary 1.3). 
Let $G$ be a bipartite graph that satiesfies the condition of the above theorem (\ref{cond}) and let $I(G)=\langle f_1,f_2,...,f_r\rangle$, where
$f_i$'s are the quadratic binomial generators. In the present article we discuss a standard monomial basis for the tangent cone of the variety
$V(I(G))$. We define the standard monomial basis using a set of degree two monomials, which we call special monomials. For each
cycle $(e_1,e_2,e_3,e_4)$ we choose either one of the monomials $x_1x_3$ or $x_2x_4$ from the binomial relation 
$x_1x_3-x_2x_4$ defined by this
primitive cycle. While doing so we make sure that the special monomials thus chosen are mutually coprime and also the following
conditions are met. If we denote $\mathcal{NS}_2$ by the set of special monomials and $\mathcal{S}_2$ be the set of monomials
$\{w_i : f_i= w_i-z_i(\neq 0)\in I(G)$ for some $z_i\in \mathcal{NS}_2 \}$, then for each $t\in \mathbb{N}$ we want 
$\prod_{i=1}^{t} z_i\neq \prod_{i=1}^{t} w_i$. If we meet these requirements then we define the standard monomials 
to be the monomials that are not divisible by any of these special monomials $z_i$. We prove in theorem
(\ref{thm3}) that the set of standard monomials thus defined of degree $k$ i.e. $\mathcal{S}_k$ gives a basis for the vector space
$\mathfrak{m}^k/ \mathfrak{m}^{k+1}$, where $\mathfrak{m}$ is a maximal ideal of the ring $R=K[x_1,x_2,...,x_n]/I(G)$. We write
down that dim $\mathfrak{m}^k/ \mathfrak{m}^{k+1}=\lvert \mathcal{S}_k \rvert$, in a simple formula involving $n$ and $k$. As a result we get a simple
formula for Hilbert-Samuel multiplicity of $V(I(G))\subset \mathbb{A}^n$ at the origin
\begin{thm} 
Let $\bold{e}$ be the Hilbert-Samuel multiplicity of the ring $R$ associated to the given graph $G$. Then 
$\bold{e}$ is given by
\begin{center}
$\bold{e}={(m-2)! \over (n-1)!}\sum_{r=0}^{p} (-1)^{r} {p \choose r}\{ \sum_{i=0}^{n-m+1} (-1)^{i}
{n \brack m+i-1}{m+i-2 \choose i}(2r)^{i}\}$.
\end{center}
\end{thm}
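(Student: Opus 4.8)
The plan is to turn the multiplicity into a monomial count via the standard monomial basis of Theorem~\ref{thm3}, and then to read $\mathbf{e}$ off as a normalized leading coefficient. Recall that the Hilbert--Samuel multiplicity of $R$ at the maximal ideal $\mathfrak{m}$ is $(d-1)!$ times the leading coefficient of the polynomial $\ell(k)$ that agrees with $k\mapsto\dim_K\mathfrak{m}^k/\mathfrak{m}^{k+1}$ for $k\gg 0$, where $d=\dim R$. Since $R=K[x]/I(G)\cong K[G]$ with $G$ connected bipartite, $d=\dim K[G]=m-1$, where $m$ is the number of vertices of $G$; hence $\mathbf{e}=(m-2)!\,c$ with $c$ the coefficient of $k^{m-2}$ in $\ell(k)$, and in particular $\deg\ell=m-2$. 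By Theorem~\ref{thm3}, $\dim_K\mathfrak{m}^k/\mathfrak{m}^{k+1}=\lvert\mathcal{S}_k\rvert$, so the whole problem reduces to writing $\lvert\mathcal{S}_k\rvert$ as a polynomial in $k$ and reading off its coefficient of $k^{m-2}$.

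First I would count $\lvert\mathcal{S}_k\rvert$. A degree-$k$ monomial in $x_1,\dots,x_n$ is standard precisely when it is divisible by none of the special monomials $z_1,\dots,z_p\in\mathcal{NS}_2$. By construction the $z_i$ are pairwise coprime of degree $2$, so for a subset $T\subseteq\{1,\dots,p\}$ the degree-$k$ monomials divisible by $\prod_{i\in T}z_i$ (a degree-$2\lvert T\rvert$ monomial supported on $2\lvert T\rvert$ distinct variables) are in bijection with the degree-$(k-2\lvert T\rvert)$ monomials in $n$ variables, of which there are $\binom{n+k-2\lvert T\rvert-1}{n-1}$. Inclusion--exclusion over $z_1,\dots,z_p$ then gives
\begin{equation*}
\lvert\mathcal{S}_k\rvert=\sum_{r=0}^{p}(-1)^r\binom{p}{r}\binom{n+k-2r-1}{n-1},
\end{equation*}
valid for every $k\ge 0$, hence, for $k\gg 0$, as an identity of polynomials in $k$.

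Next I would extract the coefficient of $k^{m-2}$. Write $\binom{n+k-2r-1}{n-1}=\frac{1}{(n-1)!}\prod_{j=1}^{n-1}(k-2r+j)$ and expand the rising factorial by the unsigned Stirling numbers of the first kind, $\prod_{j=1}^{n-1}(y+j)=\sum_{l=0}^{n-1}{n\brack l+1}y^{l}$ with $y=k-2r$; then expanding $(k-2r)^l=\sum_{i}\binom{l}{i}k^{l-i}(-2r)^i$, the terms contributing to $k^{m-2}$ are exactly those with $l-i=m-2$, i.e. $l=m+i-2$ with $i$ running from $0$ to $n-m+1$, so that $l+1=m+i-1$ and $\binom{l}{i}=\binom{m+i-2}{i}$. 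Collecting these,
\begin{equation*}
c=\frac{1}{(n-1)!}\sum_{r=0}^{p}(-1)^r\binom{p}{r}\sum_{i=0}^{n-m+1}(-1)^i{n\brack m+i-1}\binom{m+i-2}{i}(2r)^i,
\end{equation*}
and multiplying by $(m-2)!$ gives the stated formula for $\mathbf{e}$.

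Most of this is routine bookkeeping; two points need genuine care. First, one must know $\deg\ell=m-2$ \emph{exactly}, so that $(m-2)!$ --- rather than a smaller factorial --- is the correct normalization; this is where the dimension formula $\dim K[G]=m-1$ for a connected bipartite graph enters. As an independent confirmation, the same standard monomial basis yields the Hilbert series $\mathrm{HS}_R(T)=(1+T)^{p}/(1-T)^{n-p}$ at once (the $z_i$ forming a regular sequence in pairwise disjoint variables), whence $\dim R=n-p$ and $\mathbf{e}=2^{p}$; comparing with $\dim R=m-1$ forces $p=n-m+1$, and then the inner sum above is a polynomial of degree exactly $p$ in $r$, so the outer alternating $\binom{p}{r}$-sum (a $p$-th finite difference) collapses it and one recovers $(m-2)!\,c=2^{p}\ne 0$, confirming $\deg\ell=m-2$. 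Second, the clean form $\binom{n+k-2r-1}{n-1}$ of the subset intersections in the inclusion--exclusion uses exactly that the $z_i$ were chosen pairwise coprime of degree $2$, so the conditions imposed when constructing $\mathcal{NS}_2$ (and in Theorem~\ref{thm3}) are doing real work here; the rising-factorial expansion, the reindexing $l\mapsto m+i-2$, and the final multiplication by $(m-2)!$ are mechanical.
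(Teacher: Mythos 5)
Your proposal is correct and follows essentially the same route as the paper: reduce $\mathbf{e}$ to the coefficient of $k^{m-2}$ in $\lvert\mathcal{S}_k\rvert$ via Theorem~\ref{thm3} and $\dim K[G]=m-1$, count $\lvert\mathcal{S}_k\rvert$ by inclusion--exclusion over the pairwise coprime $z_i$, and extract the coefficient through the rising-factorial/Stirling-number expansion of $\binom{n+k-2r-1}{n-1}$. The added Hilbert-series cross-check ($\mathbf{e}=2^{p}$, forcing $p=n-m+1$) is a nice extra observation beyond what the paper records, but it is not needed for, and does not change, the argument.
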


\noindent As a particular result of the above theorem we have
\begin{thm}
 If the graph $G$ has $n$ number of $4$-cycles and it has $2n+2$ vertices and $3n+1$ edges, then the formula
 for the Hilbert-Samuel multiplicity $\bold{e}$ is given by $\bold{e}=2^n$.
\end{thm}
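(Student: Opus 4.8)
The plan is to obtain the value $\mathbf e=2^{n}$ by specializing the multiplicity formula of the preceding theorem to the numerical data of $G$; the cleanest route is to go back to the count $\lvert\mathcal S_{k}\rvert$ that underlies that formula and read off its leading term directly. Write $N$ for the number of edges of $G$ (equivalently the number of variables $x_{i}$), $M$ for the number of vertices, and $p$ for the number of $4$-cycles. The standing hypotheses give $N=3n+1$, $M=2n+2$, $p=n$, so that
\[
N-M+1=n,\qquad M-2=2n,\qquad N-1=3n .
\]
The first equality is the structural point behind the clean answer: the cyclomatic number $N-M+1$ of $G$ equals the number $p$ of $4$-cycles, so $I(G)$ is minimally generated by exactly $n$ quadratic binomials, $\dim R=M-1=2n+1$ (consistent with the prefactor $(m-2)!$ appearing in the preceding theorem), and $\dim\mathfrak m^{k}/\mathfrak m^{k+1}$ is eventually a polynomial in $k$ of degree $M-2=2n$. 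It therefore suffices to compute the leading coefficient of this polynomial and multiply by $(2n)!$.

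For the count I would invoke Theorem~\ref{thm3}. Since the special monomials $z_{1},\dots ,z_{n}$ are pairwise coprime squarefree quadratics, a degree-$k$ monomial of $K[x_{1},\dots ,x_{N}]$ represents an element of $\mathcal S_{k}$ precisely when it is divisible by none of the $z_{j}$, and the degree-$k$ monomials divisible by a fixed product $z_{j_{1}}\cdots z_{j_{r}}$ are in bijection with the degree-$(k-2r)$ monomials in $N$ variables. Inclusion--exclusion then gives, for $k\gg 0$,
\[
\dim\mathfrak m^{k}/\mathfrak m^{k+1}=\lvert\mathcal S_{k}\rvert=\sum_{r=0}^{n}(-1)^{r}\binom{n}{r}\binom{3n+k-2r}{3n},
\]
which is exactly the expression whose leading term the preceding theorem extracts.

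Reading off that leading term is a one-line finite-difference computation. The operator $g\mapsto\sum_{r=0}^{n}(-1)^{r}\binom{n}{r}g(k-2r)$ is, up to the overall sign $(-1)^{n}$, the $n$-th forward finite difference with step $-2$; applied to the degree-$3n$ polynomial $g(k)=\binom{3n+k}{3n}$, whose leading coefficient is $1/(3n)!$, it lowers the degree by $n$ and multiplies the leading coefficient by $(-1)^{n}(-2)^{n}\frac{(3n)!}{(2n)!}=\frac{2^{n}(3n)!}{(2n)!}$. Hence
\[
\lvert\mathcal S_{k}\rvert=\frac{1}{(3n)!}\cdot\frac{2^{n}(3n)!}{(2n)!}\,k^{2n}+(\text{lower order})=\frac{2^{n}}{(2n)!}\,k^{2n}+(\text{lower order}),
\]
and since $\dim\mathfrak m^{k}/\mathfrak m^{k+1}$ is the first difference of the Hilbert--Samuel polynomial of the $(2n+1)$-dimensional ring $R$, we conclude $\mathbf e=(2n)!\cdot\frac{2^{n}}{(2n)!}=2^{n}$. (Conceptually the same value is forced: $R$ behaves like a complete intersection of $n$ quadrics---$n$ relations in $3n+1$ variables cutting the dimension down to $2n+1$---and its tangent cone at the origin is $\operatorname{Spec} R$ since $I(G)$ is homogeneous of degree two, so $\mathbf e$ equals the degree of the associated projective variety, namely $2^{n}$ by a B\'ezout-type count.)

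Granting the preceding theorem, there is no serious obstacle beyond this bookkeeping; the one place that genuinely uses the hypotheses of the paper rather than mere numerology is the applicability of Theorem~\ref{thm3}---that under the stated constraints the special monomials can in fact be chosen mutually coprime and with $\prod_{i=1}^{t}z_{i}\neq\prod_{i=1}^{t}w_{i}$ for every $t$, so that the $n$ relations behave like a regular sequence and the $z_{j}$-free monomials really do span $\mathfrak m^{k}/\mathfrak m^{k+1}$. Once that is in place, the two displayed identities complete the argument.
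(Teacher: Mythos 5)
Your proposal is correct, and it arrives at $\mathbf{e}=2^{n}$ by the same underlying mechanism as the paper, but packaged differently. The paper's proof is a direct specialization of Theorem \ref{thm4}: it substitutes $m=2n+2$, $p=n$, $3n+1$ edges into the Stirling-number formula and then keeps only the $i=n$ term of the inner sum, because $\sum_{r=0}^{n}(-1)^{r}\binom{n}{r}r^{i}=0$ for $i<n$ while $\sum_{r=0}^{n}(-1)^{r}\binom{n}{r}r^{n}=(-1)^{n}n!$; the surviving Stirling number equals $1$ and $\binom{3n}{n}$ cancels against $(2n)!/(3n)!$, leaving $2^{n}$. You bypass Theorem \ref{thm4} and the Stirling-number bookkeeping altogether: starting from $S_{k}=\sum_{r=0}^{n}(-1)^{r}\binom{n}{r}\binom{3n+k-2r}{3n}$ (the paper's Lemma \ref{la1}), you identify the alternating sum over $r$ as, up to sign, the $n$-th finite difference with step $-2$, so applied to the degree-$3n$ polynomial $\binom{3n+k}{3n}$ it yields leading coefficient $\frac{1}{(3n)!}\cdot 2^{n}\frac{(3n)!}{(2n)!}=\frac{2^{n}}{(2n)!}$ in degree $2n$, and then Lemma \ref{lem3} with $d=m-2=2n$ gives $\mathbf{e}=(2n)!\cdot\frac{2^{n}}{(2n)!}=2^{n}$. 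The identity doing the work is the same in both treatments (the $n$-th difference annihilates degree $<n$ and scales the degree-$n$ part by $n!$ times the $n$-th power of the step), but your version is a little cleaner, avoiding the Stirling-number expansion and the sign bookkeeping of the paper's displayed computation, at the cost of re-deriving what Theorem \ref{thm4} had already packaged. Your closing remark that $R$ behaves like a complete intersection of $n$ quadrics (B\'ezout count $2^{n}$) is an unproved but reasonable sanity check, not part of the argument; the proof properly rests, as you note, on Theorem \ref{thm3} together with the coprimality and $\prod_{i}z_{i}\neq\prod_{i}w_{i}$ hypotheses that make the standard monomial count $S_{k}$ valid.
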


\noindent Roots of standard monomial theory (in short SMT)
can be found in classical works of Hodge (see \cite{hod1}, \cite{hod2}). A few more modern treatment of the techniques 
can be found in {\cite{lak-raghu}. C. S. Seshadri and V. Lakshmibai used SMT techniques on classical invariant theory 
(Ref.\cite{connection}). An important application of SMT is the determination of the singular loci of Schubert varieties 
(Ref.\cite{schu}), the reader is advise to look at these reference for a detailed application of such techniques.\\ This article is arranged in the following manner, in section 2, we introduce some terminology,
construct standard monomials of our graph and calculate the number of standard monomials of particular degree.
In section 3, we prove that standard monomials give a basis and in section 4, we give our main results. A formula for the Hilbert-Samuel multiplicity of the variety associated to the bipartite graph.

\section{Construction of the standard monomial}

Let $G$ be a bipartite graph with $m$-vertices and $n$-edges and the toric ideal $I(G)$ associated to the graph $G$ is 
generated by quadratic binomials. Let $R=K[x_1,x_2,...,x_n]/I(G)$ be the ring associated to the graph $G$. Now take $S$ 
the set of all primitive $4$-cycles in the graph $G$ and $\lvert S \rvert=p$(say). We associate each edge $e_i$ in $G$
as a variable $x_i$. Now we define the following:

\begin{defn} The set of all non-standard monomial of degree $2$ in the polynomial ring $R$ is denoted by $\mathcal{NS}_2$ 
and is defined as, let a 4-cycle be $(e_1,e_2,e_3,e_4)$ then a non-standard monomial is either $x_1x_3$ or $x_2x_4$.
Let $NS_2$ be the cardinality of $\mathcal{NS}_2$. We write

\begin{center}
$\mathcal{NS}_2=\{z_i : z_i= x_{i_1}x_{i_2}, i=1,2,...,p\}$ and $NS_2=p$.
\end{center}
\end{defn}

\noindent There is a choice involved in the above definition where one can choose all the non-standard monomials so that they are
mutually coprime then we call the graph ``good''.

\begin{defn} Denote $\mathcal{M}_2$ the set of all monomials of degree 2. The set of all standard monomials of degree $2$ 
in the polynomial ring $R$ is denoted by $\mathcal{S}_2$ and 
is defined by
\begin{center}
$\mathcal{S}_2=\{m\in \mathcal{M}_2 : z_i\nmid m, \forall i=1,2,...,p\}$. 
\end{center}
\end{defn}

\begin{note}
\label{note}
$f_i=w_i-z_i$, be the quadratic binomial for each 4-cycle, where $w_i$ and $z_i$'s are standard and non-standard monomials
of degree 2 respectively. Hence $I(G)=\langle f_i :i=1,2,...,p\rangle$.
\end{note}

\noindent Now we give the following definition of our graph

\begin{defn} Let $G$ be a bipartite graph with $m$-vertices and $n$-edges for which the toric ideal $I(G)$ associated to 
the graph $G$ is generated by quadratic binomials and it satiesfies the condition $\prod_{i=1}^{t} w_i \neq \prod_{i=1}^{t}
z_i$, for some $t$, where $w_i$ and $z_i$'s are the corresponding standard and non-standard monomials of degree 2
in a 4-cycle respectively of the graph $G$. Henceforth, all graph will be ``good''.
\end{defn}

\begin{defn} Denote $\mathcal{M}_k$ as the set of all monomials of degree $k$ in $R$ and $M_k$ as the cardinality of 
$\mathcal{M}_k$.
\end{defn}

\begin{defn} The set of all non-standard monomials of degree $k$ in $R$ is denoted by $\mathcal{NS}_k$ and is defined by all
monomials of degree $k$ which divides at least one $z_i$'s and denote the cardinality of $\mathcal{NS}_k$ by $NS_k$. We write
\begin{center}
 $\mathcal{NS}_k=\{ m\in \mathcal{M}_k : z_i\mid m$, for at least one $z_i$'s$\}$.
\end{center}
\end{defn}

\begin{defn} The set of all standard monomials of degree $k$ in $R$ is denoted by $\mathcal{S}_k$ and is defined by all 
monomials of degree $k$ which does not divide $z_i$'s and denote the cardinality of $\mathcal{S}_k$ by $S_k$. We write
\begin{center}
$\mathcal{S}_k=\{ m\in \mathcal{M}_k : z_i\nmid m,\forall i\}$.
\end{center}
\end{defn}

\noindent Now we state the following lemma
\newpage
\begin{lem}
$M_k={n+k-1 \choose n-1}$.
\end{lem}

\begin{proof}
As there are $n$-variables and we want all monomials of degree $k$, so $M_k$ is the number of non-negetive 
integer solutions of the equations  $x_1+x_2+...+x_{n}=k$, which is equal to ${n+k-1 \choose n-1}$.
\end{proof}

\noindent Now we define the following notation:\\
Let us denote $Y_i^k:=\{m\in \mathcal{M}_k : z_i\mid m\}, i=1,2,...,p$, which is a monomial of degree $k$.

\begin{lem}
\label{lem1}
$\lvert Y_i^k \rvert=M_{k-2}$ and $\lvert Y_{i_1}^{k}\cap Y_{i_2}^{k}\cap...\cap Y_{i_l}^{k}\rvert= M_{k-2l}
,l=1,2,...,p$.
\end{lem}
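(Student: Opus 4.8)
The plan is to count monomials of degree $k$ that are divisible by a fixed non-standard monomial $z_i = x_{i_1}x_{i_2}$, and more generally by a coprime collection of them. The key observation is that since the graph is ``good'', the non-standard monomials $z_1,\dots,z_p$ are mutually coprime, so $z_{i_1}\cdots z_{i_l}$ is a squarefree monomial of degree $2l$ involving $2l$ distinct variables.

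First I would handle the single-index case. A monomial $m\in\mathcal{M}_k$ lies in $Y_i^k$ precisely when $z_i\mid m$, i.e. when $m = z_i\cdot m'$ for a uniquely determined monomial $m'$ of degree $k-2$. This sets up a bijection $Y_i^k \leftrightarrow \mathcal{M}_{k-2}$ given by $m\mapsto m/z_i$, with inverse $m'\mapsto z_i m'$; the map is well-defined because $z_i$ has degree $2$, and it is clearly injective, and surjective since any $z_i m'$ is divisible by $z_i$. Hence $\lvert Y_i^k\rvert = M_{k-2}$. (If $k < 2$ then $Y_i^k = \emptyset$ and $M_{k-2}$ is interpreted as $0$, which is consistent.)

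Next I would iterate this for the intersection. A monomial $m$ lies in $Y_{i_1}^k\cap\cdots\cap Y_{i_l}^k$ iff each $z_{i_j}$ divides $m$. Because the $z_{i_j}$ are pairwise coprime, divisibility by all of them is equivalent to divisibility by their product $Z := z_{i_1}\cdots z_{i_l}$, which is a monomial of degree $2l$. So the same argument as before gives a bijection with $\mathcal{M}_{k-2l}$ via $m\mapsto m/Z$, yielding $\lvert Y_{i_1}^k\cap\cdots\cap Y_{i_l}^k\rvert = M_{k-2l}$. The only subtlety — and the one point that genuinely needs the ``good'' hypothesis — is the passage from ``divisible by each $z_{i_j}$ individually'' to ``divisible by the product''; without coprimality this step fails (e.g. $x_1x_2$ and $x_1x_3$ are both divisors of $x_1x_2x_3$ but their product is not). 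I would state this coprimality explicitly and cite the definition of a good graph. The rest is routine: combine with Lemma preceding this one ($M_{k-2l} = \binom{n+k-2l-1}{n-1}$, with the convention that binomial coefficients with negative top vanish) to conclude.
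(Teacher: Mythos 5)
Your proposal is correct and follows essentially the same route as the paper: the bijection $m\mapsto m/z_i$ (respectively $m\mapsto m/(z_{i_1}\cdots z_{i_l})$) onto $\mathcal{M}_{k-2}$ (respectively $\mathcal{M}_{k-2l}$). Your explicit appeal to the mutual coprimality of the $z_i$ to pass from divisibility by each $z_{i_j}$ to divisibility by their product is a point the paper's proof uses silently, so your write-up is if anything slightly more careful, but the argument is the same.
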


\begin{proof}
From above we have $Y_i^k:=\{m\in \mathcal{M}_k : z_i\mid m\}$.
So for $m\in Y_{i}^k$ since $z_i|m$, then $m=z_im_1$, for $m_1\in \mathcal{M}_{k-2}$.
So, for each $m$ we get different $m_1$.
Therefore $\lvert Y_i^k \rvert=M_{k-2}$.\\
For the second part, $Y_{i_1}^{k}\cap Y_{i_2}^{k}\cap...\cap Y_{i_l}^{k}=\{ m\in \mathcal{M}_k: z_i\mid m,\forall i=1,2,...,l\}$.
Hence $m=z_1z_2...z_lm_2, m_2\in \mathcal{M}_{k-2l}$.
Therefore, $\lvert Y_{i_1}^{k}\cap Y_{i_2}^{k}\cap...\cap Y_{i_l}^{k}\rvert= M_{k-2l},l=1,2,...,p$.
 
\end{proof}

\noindent Now we calculate the number of non-standard monomials of degree $k$.

\begin{lem}
$NS_k=\sum_{r=1}^{p} (-1)^{r-1}{p \choose r} M_{k-2r}$.
\end{lem}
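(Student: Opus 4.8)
The statement to prove is that $NS_k = \sum_{r=1}^{p} (-1)^{r-1}\binom{p}{r} M_{k-2r}$, which is a straightforward inclusion–exclusion argument. Let me sketch the plan.

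\medskip

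\noindent\textbf{Proof proposal.} The plan is to express $\mathcal{NS}_k$ as the union $\bigcup_{i=1}^{p} Y_i^k$, since a monomial of degree $k$ is non-standard precisely when it is divisible by at least one $z_i$, i.e. when it belongs to $Y_i^k$ for some $i$. Having done this, I would apply the inclusion–exclusion principle to compute the cardinality of the union:
\[
NS_k = \left\lvert \bigcup_{i=1}^{p} Y_i^k \right\rvert = \sum_{r=1}^{p} (-1)^{r-1} \sum_{1 \le i_1 < \cdots < i_r \le p} \left\lvert Y_{i_1}^k \cap \cdots \cap Y_{i_r}^k \right\rvert .
\]
The first step is purely definitional, so the only real content is the evaluation of the inner terms, and that is already handled by Lemma \ref{lem1}, which gives $\lvert Y_{i_1}^k \cap \cdots \cap Y_{i_r}^k \rvert = M_{k-2r}$ for every choice of $r$ indices. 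Since this common value does not depend on which $r$ indices are chosen, the inner sum over all $\binom{p}{r}$ subsets of size $r$ collapses to $\binom{p}{r} M_{k-2r}$, and substituting this back yields exactly the claimed formula.

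\medskip

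\noindent The one subtlety I would want to address explicitly is that Lemma \ref{lem1} as stated presupposes that the non-standard monomials $z_1,\dots,z_p$ are pairwise coprime — this is exactly the ``good'' hypothesis built into the definition of the graph — so that the intersection of $l$ sets $Y_{i_j}^k$ really consists of the multiples of the product $z_{i_1}\cdots z_{i_l}$, a monomial of degree $2l$, rather than something of smaller degree. Without coprimality the factor $M_{k-2r}$ could be wrong. I would simply note that the coprimality of the $z_i$'s guarantees the hypothesis of Lemma \ref{lem1} applies to every subset, so the formula goes through verbatim. A minor bookkeeping remark: one should interpret $M_j = 0$ whenever $j < 0$ (there are no monomials of negative degree), so that terms with $2r > k$ simply vanish and the sum may be harmlessly extended to $r = p$ even when $p > \lfloor k/2 \rfloor$.

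\medskip

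\noindent I do not expect any genuine obstacle here; the result is a direct application of inclusion–exclusion together with the already-established Lemma \ref{lem1}. The only thing requiring care is making the reduction from the sum over $r$-subsets to the binomial coefficient $\binom{p}{r}$, which relies on the uniformity of $\lvert Y_{i_1}^k \cap \cdots \cap Y_{i_r}^k \rvert$ across all $r$-subsets — and that uniformity is precisely the content of Lemma \ref{lem1}.
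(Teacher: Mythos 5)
Your proposal is correct and follows essentially the same route as the paper: write $\mathcal{NS}_k$ as $\bigcup_{i=1}^{p} Y_i^k$, apply inclusion--exclusion, and use Lemma \ref{lem1} to replace each $r$-fold intersection by $M_{k-2r}$, collapsing the inner sum to $\binom{p}{r}M_{k-2r}$. Your explicit remarks on the pairwise coprimality of the $z_i$'s and on setting $M_j=0$ for $j<0$ are careful touches the paper leaves implicit, but they do not change the argument.
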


\begin{proof}
We use inclusion-exclusion principle to derive the identity\\
\begin{align*}
NS_k &= \lvert \cup_{i=1}^{p} Y_i^k \rvert\\
&= \sum_{i=1}^{p} \lvert Y_i^{k} \rvert -{p \choose 2} \lvert Y_{i_1}^{k}\cap Y_{i_2}^{k}\rvert +
{p \choose 3} \lvert Y_{i_1}^{k}\cap Y_{i_2}^{k}\cap Y_{i_3}^{k}\rvert -...
\\&\hspace*{.5cm}+
(-1)^{p-1} {p \choose p} \lvert Y_1^{k}\cap Y_2^{k}\cap...\cap Y_p^{k}\rvert\\
&= p M_{k-2}-{p \choose 2}M_{k-4}+ {p \choose 3} M_{k-6}-....+(-1)^{p-1}{p \choose p} M_{k-2p}\\
&= \sum_{r=1}^{p} (-1)^{r-1} {p \choose r} M_{k-2r}.
\end{align*}

where the third equality follows from lemma (\ref{lem1}).
\end{proof}

\noindent At the junctor we can find the number of standard monomials of degree $k$.

\begin{lem}
\label{la1}
$S_k=\sum_{r=0}^{p} (-1)^{r}{p \choose r} M_{k-2r}$.
\end{lem}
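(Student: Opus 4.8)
The plan is to derive this count directly from the two facts already in hand: the inclusion–exclusion formula for $NS_k$ proved in the preceding lemma, and the observation that the standard and non-standard monomials of degree $k$ partition $\mathcal{M}_k$. First I would note that, by the very definitions of $\mathcal{S}_k$ and $\mathcal{NS}_k$, a monomial $m\in\mathcal{M}_k$ lies in $\mathcal{NS}_k$ precisely when $z_i\mid m$ for at least one $i$, and lies in $\mathcal{S}_k$ precisely when $z_i\nmid m$ for all $i$. These two conditions are exhaustive and mutually exclusive, so $\mathcal{M}_k=\mathcal{S}_k\sqcup\mathcal{NS}_k$ is a disjoint union and hence $M_k=S_k+NS_k$, i.e. $S_k=M_k-NS_k$.

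Next I would substitute the formula $NS_k=\sum_{r=1}^{p}(-1)^{r-1}\binom{p}{r}M_{k-2r}$ from the previous lemma, obtaining
\begin{align*}
S_k &= M_k-\sum_{r=1}^{p}(-1)^{r-1}\binom{p}{r}M_{k-2r}
   = M_k+\sum_{r=1}^{p}(-1)^{r}\binom{p}{r}M_{k-2r}.
\end{align*}
Finally I would absorb the leading term into the sum as its $r=0$ summand, using $\binom{p}{0}M_{k-0}=M_k$, which yields $S_k=\sum_{r=0}^{p}(-1)^{r}\binom{p}{r}M_{k-2r}$, exactly as asserted.

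The only things worth spelling out are bookkeeping points rather than genuine obstacles: that the partition $\mathcal{M}_k=\mathcal{S}_k\sqcup\mathcal{NS}_k$ is indeed exhaustive and disjoint (immediate from the definitions, as noted above), and the convention that $M_j=0$ when $j<0$, so that the terms with $k-2r<0$ vanish and the identity is valid for every $k\ge 0$; this is consistent with the count $M_k=\binom{n+k-1}{n-1}$ since there are no monomials of negative degree. I expect no real difficulty here — the statement is a one-line consequence of the inclusion–exclusion count already established, combined with complementation inside $\mathcal{M}_k$.
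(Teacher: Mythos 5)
Your proof is correct and follows the same route as the paper: the paper's own (one-line) argument is precisely that $\mathcal{S}_k$ is the complement of $\mathcal{NS}_k$ in $\mathcal{M}_k$, so $S_k=M_k-NS_k$, and substituting the inclusion--exclusion formula for $NS_k$ with the term $M_k$ absorbed as the $r=0$ summand gives the stated identity. Your added remarks on disjointness and the convention $M_j=0$ for $j<0$ are harmless clarifications, not a different method.
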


\begin{proof}
As $S_k=M_k\setminus NS_k$. Hence the lemma follows.
\end{proof}

\section{Linear Independence and Generation}

\begin{thm}
\label{thm1}
Let $G$ be a bipartite graph and $R=K[x_1,...,x_n]/I(G)$ be the ring. Let $\mathfrak{m}=\langle x_1,x_2,...,x_n \rangle$ 
be a maximal ideal of $R$, then the set of all standard monomials of degree $k$ i.e. $\mathcal{S}_k$ in the vector 
space $\mathfrak{m}^k/ \mathfrak{m}^{k+1}$ are linearly independent over $R/\mathfrak{m}\cong K$.
 
\end{thm}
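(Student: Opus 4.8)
The plan is to show that any $K$-linear relation among the images of the standard monomials of degree $k$ in $\mathfrak{m}^k/\mathfrak{m}^{k+1}$ must be trivial. Suppose $\sum_{m\in\mathcal{S}_k}\lambda_m\, m\in\mathfrak{m}^{k+1}$ with $\lambda_m\in K$; lifting to $K[x_1,\dots,x_n]$, this means $\sum_m\lambda_m\,m=g+h$ where $g\in I(G)$ and $h$ is a polynomial all of whose terms have degree $\geq k+1$. Comparing homogeneous components of degree $k$, and using that $I(G)$ is generated by the homogeneous quadratic binomials $f_i=w_i-z_i$, we may assume $\sum_m\lambda_m\,m$ lies in the degree-$k$ part of the ideal $I(G)$, i.e. $\sum_{m\in\mathcal{S}_k}\lambda_m\, m=\sum_i p_i\,(w_i-z_i)$ for some homogeneous $p_i$ of degree $k-2$.

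The key point I would isolate is a normal-form / rewriting statement: every monomial of degree $k$ is congruent modulo $I(G)$ to a $K$-linear combination of standard monomials, and moreover the binomial relations $w_i\equiv z_i$ only ever \emph{replace} a standard factor $w_i$ by the non-standard factor $z_i$ (or, read the other way, rewrite a monomial divisible by $z_i$ into one divisible by $w_i$). Because the paper's ``good'' hypothesis forces the chosen non-standard monomials $z_i$ to be pairwise coprime and forbids $\prod_{i=1}^t z_i=\prod_{i=1}^t w_i$ for every $t$, there is no way to start from a product of standard monomials, apply the relations, and return to a product of standard monomials; in other words the rewriting system (orient $w_i\to z_i$) is confluent and terminating on monomials, so each monomial has a unique standard representative. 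I would phrase this as: the $f_i$ form a Gröbner basis for $I(G)$ with respect to a term order making each $w_i$ the leading term, and $\mathcal{S}_k$ is exactly the set of degree-$k$ standard monomials for this Gröbner basis, hence their images form a $K$-basis of $(K[x]/I(G))_k$ — in particular they are linearly independent. Concretely: in the expression $\sum_{m\in\mathcal{S}_k}\lambda_m\,m=\sum_i p_i(w_i-z_i)$, pick the $w$-largest monomial $\mu$ appearing on the right with nonzero coefficient; it must be some $w_i$-multiple, hence divisible by $z_i$ after the only cancellations allowed by coprimality are accounted for, so $\mu\notin\mathcal{S}_k$, yet it cannot cancel against another term because the $z_i$ are coprime and the no-cycle condition prevents a chain of substitutions from closing up; this forces all $\lambda_m=0$.

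I would organize the write-up as follows. First, record the normal-form lemma: orienting $w_i\to z_i$ gives a terminating rewriting on monomials (termination because each step strictly decreases in the chosen monomial order), and it is locally confluent precisely because the $z_i$ are pairwise coprime and the ``good'' condition $\prod z_i\neq\prod w_i$ rules out the only obstruction to joining overlapping rewrites; hence every monomial has a unique $\mathcal{S}$-normal form. Second, deduce that $\{m\in\mathcal{S}_k\}$ spans $(R)_k$ and that a relation $\sum\lambda_m m\in I(G)$ degree by degree reduces, via the rewriting, to $0$ with each standard monomial tracked separately, giving $\lambda_m=0$. Third, observe $\mathfrak{m}^k/\mathfrak{m}^{k+1}\cong (R)_k$ since $R$ is generated in degree one and $I(G)$ is homogeneous, so linear independence in $(R)_k$ is the same as linear independence of the images in $\mathfrak{m}^k/\mathfrak{m}^{k+1}$.

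The main obstacle is establishing confluence of the rewriting honestly: when a monomial is divisible by two leading terms $w_i$ and $w_j$, one must check that rewriting by either eventually reaches the same normal form. Coprimality of the $z_i$ handles the case where the overlap is ``clean'', but the $w_i$ themselves need not be coprime, so overlaps do occur; this is exactly where the hypothesis $\prod_{i=1}^t w_i\neq\prod_{i=1}^t z_i$ is doing the work — it prevents a cycle of substitutions $w_{i_1}\to z_{i_1}$, (regroup) $\to w_{i_2}\to z_{i_2},\dots$ from returning to a standard monomial, which is precisely the Buchberger $S$-polynomial condition for $\{f_i\}$ to be a Gröbner basis. I expect to spend most of the proof verifying that the ``good'' hypotheses are exactly what is needed to rule out each type of critical pair, and then the linear-independence conclusion is immediate from the uniqueness of normal forms.
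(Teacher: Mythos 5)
Your overall strategy (unique normal forms for a rewriting system attached to the binomials $f_i=w_i-z_i$, i.e.\ a Gr\"obner-basis argument) is a legitimate and arguably cleaner alternative to the paper's proof, which instead takes a minimal representation $\sum a_i m_i=\sum b_j f_j$ and traces a chain of forced cancellations (Lemma \ref{lem2}). But as written your argument has two genuine problems. First, the orientation is inverted: the paper's standard monomials $\mathcal{S}_k$ are the monomials not divisible by any of the \emph{non-standard} special monomials $z_i$, so for $\mathcal{S}_k$ to be the set of Gr\"obner-standard monomials you must make the $z_i$ the leading terms and rewrite $z_i\to w_i$; with your choice ($w_i$ leading, rewrite $w_i\to z_i$) the normal-form monomials are those not divisible by any $w_i$, which is not $\mathcal{S}_k$, so the independence statement you would obtain concerns the wrong set. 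Second, your termination claim (``each step strictly decreases in the chosen monomial order'') presupposes the existence of a monomial order in which $z_i>w_i$ for all $i$ simultaneously. That is not given: the paper's hypothesis is only the combinatorial condition $\prod z_i\neq\prod w_i$, and extracting a compatible weight order from it needs an extra (Gordan/Farkas-type) argument using the condition with multiplicities; alternatively one must prove termination directly, e.g.\ a rewriting chain in fixed degree $k$ lives in a finite set of monomials, so non-termination forces a repeated monomial, hence a cycle of substitutions, hence an identity $\prod w_i=\prod z_i$ over the multiset of rewrites used, contradicting ``goodness''. You assert this only informally, and it is precisely the step the hypothesis exists to supply.

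Relatedly, your account of where the hypotheses enter is misplaced. With the correct orientation the leading terms $z_i$ are pairwise coprime by the ``good'' assumption, so all critical pairs are trivial (Buchberger's first criterion) and local confluence is automatic; the overlaps among the $w_i$ that you worry about generate no critical pairs at all. The condition $\prod z_i\neq\prod w_i$ is not a confluence condition but exactly the acyclicity/termination condition just described (it plays the same role in the paper's Theorem \ref{thm2}). Once orientation, termination, and the (easy) confluence are in place, your conclusion does follow: unique normal forms give linear independence of $\mathcal{S}_k$ in the degree-$k$ graded piece of $K[x_1,\dots,x_n]/I(G)$, which you correctly identify with $\mathfrak{m}^k/\mathfrak{m}^{k+1}$, and you would also recover the spanning statement that the paper proves separately. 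So the route is viable, but these two repairs are needed before it is a proof.
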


\begin{proof}
Let $S_1=\{m_1,m_2,...,m_r\}$ be the set of all standard monomials of degree $k$, where $m_i\neq m_j$ for all
$i\neq j$. Let  $\sum a_{i}m_{i}=0$ in $\mathfrak{m}^k/ \mathfrak{m}^{k+1}$, where $a_i\in K$ equivalantly
$\sum a_{i}m_{i} \in \mathfrak{m}^{k+1}$. Let $\sum a_{i}m_{i}=g\in \mathfrak{m}^{k+1}$.
Therefore, $\sum a_{i}m_{i}-g=0$ in $R$. Equivalantly $\sum a_{i}m_{i}-g \in I(G)$. 
Now since $I(G)$ is a homogeneous ideal then each element of $I(G)$ is homogeneous.
Now since $g\in \mathfrak{m}^{k+1}$, this implies $g=0$. Therefore $\sum a_{i}m_{i}\in I(G)$.
Let $I(G)=\langle f_1,f_2,...,f_l\rangle$ for some $l$. Now the rest proof of the theorem follows from the following
lemma (\ref{lem2})
\end{proof}

\begin{lem}
\label{lem2}
With the same notation as above theorem, if $\sum a_{i}m_{i}\in I(G)$ then $a_i=0$ for all $i$.
\end{lem}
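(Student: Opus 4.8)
The plan is to show that a $K$-linear combination of distinct standard monomials of degree $k$ cannot lie in $I(G)$ unless all coefficients vanish. Since $I(G) = \langle f_1, \dots, f_p \rangle$ with each $f_i = w_i - z_i$ a quadratic binomial (Note \ref{note}), an element $h = \sum_i a_i m_i \in I(G)$ can be written as $h = \sum_{j=1}^{p} g_j f_j$ for polynomials $g_j \in K[x_1,\dots,x_n]$; after expanding and collecting terms by degree we may assume each $g_j$ is homogeneous of degree $k-2$. The strategy is to run an elimination/rewriting argument: whenever a term of the form $z_j \cdot (\text{monomial})$ appears on the right-hand side we replace it, using $f_j$, by $w_j \cdot (\text{monomial})$, and track that this process terminates and never reintroduces a standard monomial divisible by some $z_i$.

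First I would set up the combinatorial core. Because the graph is ``good'', the non-standard monomials $z_1, \dots, z_p$ are pairwise coprime, and by the defining hypothesis of a good graph $\prod_{i=1}^t w_i \neq \prod_{i=1}^t z_i$ for every $t$; these are exactly the conditions needed to make the rewriting confluent and terminating. I would define a rewriting relation on monomials of degree $k$: $z_i \cdot u \rightsquigarrow w_i \cdot u$ for any monomial $u$ of degree $k-2$. A monomial is standard precisely when it is irreducible under $\rightsquigarrow$. The key lemma to establish is that this rewriting system is \emph{terminating} — here the coprimality of the $z_i$'s controls overlaps, and the condition $\prod z_i \neq \prod w_i$ rules out the cyclic situation where a sequence of rewrites returns to its starting monomial (which would allow an infinite descent and, worse, would mean some product of $w$'s equals a product of $z$'s in $R$, contradicting goodness). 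Termination plus the fact that each rewrite step corresponds to adding a multiple of some $f_j$ (hence does not change the class modulo $I(G)$) lets me push any $h \in I(G)$, written as $\sum g_j f_j$, down to a canonical form.

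Next I would run the actual argument. Take $h = \sum a_i m_i$ with the $m_i$ distinct standard monomials, and suppose $h = \sum_{j} g_j f_j \neq 0$. Look at the monomials appearing in the $g_j f_j$ expansion: every $f_j$ contributes a $+w_j(\cdot)$ term and a $-z_j(\cdot)$ term. Pick a non-standard monomial $M$ appearing with nonzero total coefficient somewhere in the expansion that is maximal with respect to the termination order; since $h$ itself has only standard monomials, the occurrences of $M$ must cancel, which forces a relation among the $g_j$'s allowing us to ``absorb'' that $M$ by modifying the $g_j$'s (replace the offending piece of $g_j$ using $f_j \equiv 0$). Iterating — which terminates by the well-founded order — we arrive at an expression $h = \sum_j g_j' f_j$ in which \emph{no} non-standard monomial survives; but then comparing the standard-monomial parts on both sides, the left side is $\sum a_i m_i$ and the right side, having been stripped of its non-standard content, must contribute each binomial's two terms in a way that still cancels, forcing $\sum a_i m_i = 0$ as an honest polynomial identity, hence $a_i = 0$ for all $i$.

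The main obstacle I expect is making the rewriting/elimination argument genuinely rigorous rather than heuristic: one must verify that the rewriting system $z_i u \rightsquigarrow w_i u$ is confluent and terminating, and this is exactly where coprimality of the $z_i$ (to rule out critical-pair obstructions between $z_i$ and $z_j$) and the hypothesis $\prod_{i=1}^t w_i \neq \prod_{i=1}^t z_i$ (to rule out rewriting cycles) must both be used in an essential way — indeed this is presumably why ``good'' was built into the definition of the graph. A cleaner alternative worth attempting is to exhibit an explicit monomial order $<$ on $K[x_1,\dots,x_n]$ for which $\{z_i - w_i\}$ (or $\{w_i - z_i\}$) is a Gröbner basis of $I(G)$ with leading terms exactly the $z_i$; then the standard monomials are precisely the standard monomials of the Gröbner basis, their linear independence modulo $I(G)$ is immediate from the theory of Gröbner bases, and goodness is what guarantees such an order exists. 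I would try the Gröbner-basis route first and fall back on the hand-rolled rewriting argument if pinning down the order proves awkward.
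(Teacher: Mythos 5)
Your proposal is correct in substance but follows a genuinely different route from the paper. The paper argues directly on a minimal representation $\sum a_i m_i=\sum b_j f_j$: it locates the standard monomial $m_1$ as a term $c_1w_1$ arising from some $b_jf_j$, observes that the companion non-standard term $-c_1z_1$ must cancel against another term of the right-hand side, and follows the resulting chain of forced cancellations (using coprimality of the $z_i$'s and minimality of the representation) until the chain collapses, giving $c_1=0$, hence $m_1=0$, a contradiction. You instead set up the rewriting system $z_iu\rightsquigarrow w_iu$ and argue via unique normal forms: acyclicity (hence termination, since there are only finitely many monomials of degree $k$) follows from the ``good'' hypothesis $\prod w_i\neq\prod z_i$, local confluence follows from pairwise coprimality of the $z_i$ (the two redexes are disjoint and commute), so Newman's lemma gives a well-defined $K$-linear normal-form map that kills every $u(w_i-z_i)$, hence all of $I(G)$, and fixes standard monomials; applying it to $\sum a_im_i\in I(G)$ forces $\sum a_im_i=0$ identically, so all $a_i=0$. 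Your Gröbner variant also goes through: acyclicity yields, by a Gordan/Farkas-type argument, a weight order making the $z_i$ the leading terms, and since these leading terms are pairwise coprime Buchberger's first criterion shows $\{w_i-z_i\}$ is a Gröbner basis, giving independence of the standard monomials at once. Your route is more systematic and, as a bonus, re-proves the generation statement (Theorem \ref{thm2}) simultaneously, whereas the paper's cancellation-chain argument is more elementary and self-contained but leaves several steps (why $m_1=c_1w_1$, and the ``continuing in this way'' descent) only sketched. One caveat, common to your write-up and to the paper itself: a rewriting cycle only yields $\prod_j w_{i_j}=\prod_j z_{i_j}$ over an arbitrary multiset of indices, while the ``good'' condition is literally stated for products over $i=1,\dots,t$; for either argument the hypothesis must be read (as the paper implicitly does in the proof of Theorem \ref{thm2}) as excluding all such multiset relations.
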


\begin{proof}Let $\sum a_{i}m_{i}=\sum b_{j}f_{j}$ and assume that at least one of $a_i\neq 0$, without loss of generality let $a_1\neq 0$ 
and take $a_1=1$.\\
Therefore,
\begin{eqnarray}
m_1+ \sum_{i\neq 1} a_{i}m_{i}=\sum b_{j}f_{j}=\sum b_{j}(w_j-z_j)\label{eqn1}
\end{eqnarray}
Let us also assume that the above representation is minimal in sense that if we write
$b_j = \sum_{i=1}^{l} \alpha_i \mu_{ij}$ where $\mu_{ij}$ are 
monomials and $\alpha_i \neq 0$. We are assuming that there are no $\beta_{ij}$ (not all zero) such that 
$\sum_{j } \sum_{i=1}^{l} \beta_{ij}m_{ij}f_j=0$.\\
Let 
\begin{eqnarray}
m_1=c_1 w_1 \label{eqn2}
\end{eqnarray}
where $c_1$ and $w_1$ are standard monomials. Therefore, $-c_1z_1$ can be cancelled with either of $c_jz_j$ or $c_jw_j$.
If $-c_1z_1=c_jz_j$ then either $z_j\mid c_1$ or
$z_j\mid z_1$. But $z_j\nmid c_1$ as by (\ref{eqn2}) $c_1$ is standard, so $z_j\mid z_1\Rightarrow z_j=z_1$, by our construction.
This implies $w_j=w_1\Rightarrow -c_1=c_j$. Therefore $c_jf_j=-c_1f_1\Rightarrow c_jf_j+ c_1f_1=0$, which is a contradiction
as minimal representation of (\ref{eqn1}) is zero. So $-c_1z_1\neq c_j z_j$. Therefore $-c_1z_1=c_jw_j$. Now again $-c_j z_j=c_rs_r$,
where $s_r=z_r$ or $w_r$. Continuing in this way we get $c_ps_p=0$, where $s_p=w_p$ or $z_p$. This implies $c_p=0$ as $w_p$ and
$z_p$ can't be zero. Therefore we will get $c_1=0\Rightarrow m_1=0$, which is a contradiction as $m_1$ can't be zero. Therefore 
the only possibility for $\sum a_{i}m_{i}=\sum b_{j}f_j$ is $a_i=0$, for all $i$.
\end{proof}

\noindent Let us define the following notation:

\begin{defn} For $m$ be a standard monomial, define $\lvert m \rvert=\#\{z_i : z_i\mid m\}$.
\end{defn}

\begin{thm}
\label{thm2}
Let $G$ be a bipartite graph and $R$ be the ring then the set of standard monomials of degree $k$ i.e. $\mathcal{S}_k$
generates the space $\mathfrak{m}^k/\mathfrak{m}^{k+1}$ over $K$.
 
\end{thm}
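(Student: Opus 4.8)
The plan is to show that every monomial of degree $k$ in $R$ can be rewritten, modulo $I(G)$ (and hence modulo $\mathfrak{m}^{k+1}$ once we pass to the associated graded picture), as a $K$-linear combination of standard monomials of degree $k$, i.e.\ of elements of $\mathcal{S}_k$. Since $\mathfrak{m}^k/\mathfrak{m}^{k+1}$ is spanned by the images of the degree-$k$ monomials of $K[x_1,\dots,x_n]$, it suffices to handle a single non-standard monomial $m\in\mathcal{M}_k$ and express it in terms of standard ones.

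First I would set up a straightening/reduction procedure. If $m$ is non-standard then by definition some $z_i$ divides $m$, say $m=z_i\,m'$ with $m'\in\mathcal{M}_{k-2}$; using the relation $f_i=w_i-z_i\in I(G)$ from Note~\ref{note} we replace $m$ by $w_i\,m'$, which is congruent to $m$ modulo $I(G)$ and has the same degree $k$. If $w_i m'$ is standard we are done with this monomial; if not, we repeat the substitution on a $z_j$ dividing $w_i m'$. The key step is to prove that this process terminates, so that after finitely many steps we land in the span of $\mathcal{S}_k$. This is exactly where the ``good'' hypothesis $\prod_{i=1}^{t} w_i \neq \prod_{i=1}^{t} z_i$ and the mutual coprimality of the $z_i$'s (the special monomials) are used: together with the fact that the $f_i$ are the only generators, these conditions forbid an infinite chain of rewrites, because such a chain would force an equality of products of the form $\prod w_{i_s} = \prod z_{i_s}$. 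To make termination precise I would introduce the quantity $\lvert m\rvert = \#\{z_i : z_i\mid m\}$ introduced just before the statement, or a refinement of it (e.g.\ a lexicographic/weighted count that strictly decreases), and argue that each substitution step strictly decreases this measure; coprimality of the $z_i$ guarantees that replacing $z_i$ by $w_i$ does not create as many new divisibilities by other $z_j$'s as it destroys, and the ``good'' condition rules out the borderline cycling case.

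Having established termination, the linear combination produced is finite and its terms are standard monomials of degree $k$, so $m\in\operatorname{span}_K\mathcal{S}_k$ inside $R$; passing to $\mathfrak{m}^k/\mathfrak{m}^{k+1}$ (every correction term stays in degree $k$, hence no element of $\mathfrak{m}^{k+1}$ is introduced) gives the claimed spanning. Combined with Theorem~\ref{thm1}, which gives linear independence, this shows $\mathcal{S}_k$ is a basis of $\mathfrak{m}^k/\mathfrak{m}^{k+1}$, and then $\dim_K \mathfrak{m}^k/\mathfrak{m}^{k+1} = S_k$ as computed in Lemma~\ref{la1}.

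The main obstacle is the termination argument: one has to rule out an infinite reduction sequence cleanly, and the only leverage available is the ``good'' condition $\prod_{i=1}^t w_i \neq \prod_{i=1}^t z_i$ together with coprimality of the special monomials. I expect the proof to proceed by contradiction — assume an infinite chain $m \to w_{i_1}m_1 \to w_{i_2}m_2 \to \cdots$, track how the multiset of $z$-divisors evolves, and extract from it a relation $\prod_{s=1}^{t} z_{i_s} = \prod_{s=1}^{t} w_{i_s}$ for some $t$, contradicting goodness; the coprimality is what lets one identify the cancellations unambiguously, exactly as in the chain-of-cancellations argument used in the proof of Lemma~\ref{lem2}.
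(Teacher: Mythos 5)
Your proposal is correct and follows essentially the same route as the paper: rewrite any non-standard monomial by replacing a divisor $z_i$ with $w_i$ via $f_i=w_i-z_i$, and rule out non-termination by observing that an infinite chain of such degree-preserving rewrites would force a repetition (all terms lie in the finite set $\mathcal{M}_k$), which yields $\prod w_{i_s}=\prod z_{i_s}$ and contradicts the ``good'' hypothesis. The paper organizes this as an induction on $\lvert m\rvert$ with the cycling contradiction handling the case where $\lvert m\rvert$ does not drop, which is exactly the fallback you describe.
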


\begin{proof} To prove the generation we use an induction argument on $\lvert m \rvert$. Note that, 
if $\lvert m \rvert=0$ then there does not exist $z_i$ such that $z_i\mid m$, then $m$ is standard. Assume that standard monomials generate all non-standard monomials for  $\lvert m \rvert \leq n-1$.
Now we prove for all $m$ with $\lvert m \rvert=n$, then there must exist $z_1$ such that $z_1\mid m$ as $n\geq 1$.
Therefore, $m={m}_1{z_1}=m_1 w_1 + I(G)$. Now if  $\lvert m_1 w_1 \rvert \textless n$ then by induction 
we are done. Else, $\lvert m_1 w_1 \rvert = n$. Now if any $z_i$ that divides $m_1 w_1$, then we replace $m_1 w_1$ by $\frac{m_1 w_1}{z_2} w_2=m_2$ (say), if $\lvert m_2 \rvert \textless n$ then
we are done again. Else we continue, but this process cannot go for ever, it has to stop somewhere, if for some $m_i$ there is
$k$ such that $m_i=m_{i+k}$ then we have the following
\begin{center}
  $m_i=m_i \frac{w_{i+1}}{z_{i+1}}\frac{w_{i+2}}{z_{i+2}}...\frac{w_{i+k}}{z_{i+k}}$
\end{center}

\noindent Now since $m_i\neq 0$ we have
\begin{center}
 $\frac{w_{i+1}}{z_{i+1}}\frac{w_{i+2}}{z_{i+2}}...\frac{w_{i+k}}{z_{i+k}}=1$\\
or, $\prod w_i=\prod z_i$
\end{center}

 \noindent Which is a contradiction as by our definition of graph. Hence the theorem follows.
\end{proof}
\noindent Now combining theorem (\ref{thm1}) and (\ref{thm2}) we get the following result

\begin{thm}
\label{thm3}
Let $G$ be the graph and $R$ be the ring. Then the set of all standard monomials of degree $k$
i.e. $\mathcal{S}_k$ forms a basis for the vector space $\mathfrak{m}^k/ \mathfrak{m}^{k+1}$ over the field $K$.
\end{thm}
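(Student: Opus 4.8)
The plan is to assemble Theorem~\ref{thm3} directly from the two results that precede it, since together they give exactly the two halves of the definition of a basis. First I would recall that by Theorem~\ref{thm1} the set $\mathcal{S}_k$ of standard monomials of degree $k$ is linearly independent in the $K$-vector space $\mathfrak{m}^k/\mathfrak{m}^{k+1}$ (using $R/\mathfrak{m}\cong K$), and by Theorem~\ref{thm2} the same set generates $\mathfrak{m}^k/\mathfrak{m}^{k+1}$ over $K$. A set that is both linearly independent and spanning is, by definition, a basis; so the theorem follows immediately by combining the two.

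In writing this up I would be slightly more careful about one point that the excerpt glosses over: Theorem~\ref{thm2} is stated in terms of standard monomials "generating all non-standard monomials," but what is actually needed is that every element of $\mathfrak{m}^k/\mathfrak{m}^{k+1}$ is a $K$-linear combination of images of standard monomials. I would note that $\mathfrak{m}^k/\mathfrak{m}^{k+1}$ is spanned over $K$ by the images of all degree-$k$ monomials in $R$ (these are the images of the $x_{i_1}\cdots x_{i_k}$, and any monomial of degree $>k$ lies in $\mathfrak{m}^{k+1}$), and each such monomial is either standard or non-standard; the non-standard ones are rewritten via the induction in Theorem~\ref{thm2}. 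Hence $\mathcal{S}_k$ spans, which is the content I would cite.

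The main obstacle, such as it is, is purely bookkeeping rather than mathematical: one must make sure that the "good graph" hypothesis $\prod_{i=1}^t w_i \neq \prod_{i=1}^t z_i$ is in force, since it is exactly what makes the rewriting procedure in Theorem~\ref{thm2} terminate and what rules out the degenerate cancellations in Lemma~\ref{lem2}; both Theorem~\ref{thm1} and Theorem~\ref{thm2} already invoke it, so nothing new is required here. I would also remark, as an immediate consequence worth recording, that $\dim_K \mathfrak{m}^k/\mathfrak{m}^{k+1} = S_k = \sum_{r=0}^{p}(-1)^r \binom{p}{r} M_{k-2r}$ by Lemma~\ref{la1}, which is the computational payoff used in the Hilbert--Samuel multiplicity formula of the next section. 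The proof itself is then a single sentence: by Theorem~\ref{thm1} the set $\mathcal{S}_k$ is linearly independent and by Theorem~\ref{thm2} it spans $\mathfrak{m}^k/\mathfrak{m}^{k+1}$, so it is a $K$-basis, as claimed. $\qed$
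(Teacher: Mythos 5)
Your proposal is correct and matches the paper exactly: the paper also obtains Theorem~\ref{thm3} by simply combining the linear independence from Theorem~\ref{thm1} with the spanning statement of Theorem~\ref{thm2}. Your additional remarks on spanning and the ``good graph'' hypothesis are fine clarifications but not a departure from the paper's route.
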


\begin{thm} $dim_K$ $\mathfrak{m}^{k}/\mathfrak{m}^{k+1}=S_k$.
 
\end{thm}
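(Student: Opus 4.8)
The plan is to observe that this final statement is an immediate consequence of Theorem \ref{thm3} together with Lemma \ref{la1}. First I would invoke Theorem \ref{thm3}, which asserts that $\mathcal{S}_k$ is a $K$-basis of the vector space $\mathfrak{m}^k/\mathfrak{m}^{k+1}$. Since the dimension of a vector space over a field equals the cardinality of any basis, we immediately get $\dim_K \mathfrak{m}^k/\mathfrak{m}^{k+1} = \lvert \mathcal{S}_k \rvert$. By definition the right-hand side is exactly $S_k$, so the stated equality follows at once.

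For completeness I would add one sentence recalling the explicit closed form: combining this with Lemma \ref{la1} gives
\begin{center}
$\dim_K \mathfrak{m}^{k}/\mathfrak{m}^{k+1} = S_k = \sum_{r=0}^{p} (-1)^{r}\binom{p}{r} M_{k-2r}$,
\end{center}
with $M_{k-2r} = \binom{n+k-2r-1}{n-1}$ whenever $k-2r \geq 0$ and $M_{k-2r}=0$ otherwise. This makes the Hilbert function of the associated graded ring $\bigoplus_k \mathfrak{m}^k/\mathfrak{m}^{k+1}$ completely explicit and sets up the multiplicity computation in Section 4.

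There is essentially no obstacle here: the content has already been done in Theorems \ref{thm1}, \ref{thm2} (linear independence and generation) and packaged in Theorem \ref{thm3}. The only thing worth being slightly careful about is that $\mathcal{S}_k$ is genuinely a \emph{set} of monomials with distinct elements, so that ``cardinality of the basis'' and ``number of standard monomials'' coincide with the count $S_k$ from Lemma \ref{la1}; this is clear since distinct monomials are distinct elements of $R/\mathfrak{m}$-span once linear independence is known. Hence the proof is a one-line deduction and I would present it as such.
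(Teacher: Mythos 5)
Your proposal is correct and matches the paper's own argument, which likewise deduces the statement directly from Theorem \ref{thm3} (the basis theorem), with the cardinality count $S_k$ supplied by Lemma \ref{la1}; your extra remarks on the explicit formula are consistent additions, not a different route.
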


\begin{proof}
 It follows from the above theorem.
\end{proof}

\section{Hilbert-Samuel multiplicity}

In this section, we calculate Hilbert-Samuel multiplicity of $V(I(G))\subset \mathbb{A}^n$ at the origin of a ring
$R=K[x_1,x_2,...,x_n]/I(G)\cong K[G]$. Now we know that the Krull-dimension of $K[G]$ is $m-1$, as there is $m$-vertices
(ref.\cite{oh-hb}). We need rising factorial power and Stirling number of first kind in this section. Let us recall few basic
concepts as detailed treatment can be found in ref.\cite{con}. \\
The rising factorial power is\\
$x^{\overline{k}}= x(x+1)...(x+k-1), k\in \mathbb{N}, k\geq 1$\\
And also we have the following\\
$x^{\overline{k}}=\sum_{l=1}^{k} {k \brack l} x^l$.

Let us recall few definitions and results about Hilbert-Samuel multiplicity. Let $R$ be a local Noetherian ring with maximal
ideal $\mathfrak{m}$ and let $M$ be a finitely generated module of dimension $d+1$. Let $\mathfrak{q}\subseteq \mathfrak{m}$ be
an ideal of $R$ such that $M/ \mathfrak{q}M$ has finite length. Then the Hilbert function $H_M(n)$ is defined by
\begin{center}
$H_M(n)=$length$(\mathfrak{q}^nM/\mathfrak{q}^{n+1}M)$.
\end{center}
$H_M(n)$ agrees with some polynomial function $P_\mathfrak{q}(M)(n)$ for large values of $n$, called Hilbert-Samuel polynomial
of $M$ with respect to $\mathfrak{q}$.

\begin{defn} The Hilbert-Samuel multiplicity of $\mathfrak{q}$ on $M$ is denoted by $e(\mathfrak{q},M)$ and is defined as $d!$
times the coefficient of the term of degree $d$ in the polynomial $P_\mathfrak{q}(M)$. 
\end{defn}
\begin{rem}
\label{rem1}
dim $M$= deg$P_\mathfrak{q}(M)+1$.
\end{rem}
\begin{lem} 
\label{lem3}
Let $M$ be a module of dimension $d+1$ and $\mathfrak{q}$ be an ideal of $R$ such that $M/ \mathfrak{q}M$ has
finite length. Then
\begin{center}
$e(\mathfrak{q},M)=d! \lim\limits_{n\to \infty} \frac{\rm length(M/q^n M)}{n^d}$.
\end{center}
\end{lem}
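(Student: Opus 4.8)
The plan is to derive the limit formula from the standard fact that, for an ideal $\mathfrak{q}$ of definition on a module $M$ of dimension $d+1$, the Hilbert–Samuel function $H_M(n) = \mathrm{length}(\mathfrak{q}^nM/\mathfrak{q}^{n+1}M)$ eventually agrees with a polynomial $P_{\mathfrak{q}}(M)(n)$ of degree $d$ (this is Remark~\ref{rem1}, which gives $\deg P_{\mathfrak{q}}(M) = \dim M - 1 = d$). Write $P_{\mathfrak{q}}(M)(n) = \frac{e}{d!}n^d + (\text{lower order terms})$, where $e = e(\mathfrak{q},M)$ by the definition just given. First I would relate $\mathrm{length}(M/\mathfrak{q}^nM)$ to the partial sums of $H_M$: from the exact sequences $0 \to \mathfrak{q}^jM/\mathfrak{q}^{j+1}M \to M/\mathfrak{q}^{j+1}M \to M/\mathfrak{q}^jM \to 0$ one gets, by induction on $n$ and additivity of length, the identity $\mathrm{length}(M/\mathfrak{q}^nM) = \sum_{j=0}^{n-1} H_M(j)$.

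Next I would invoke the elementary fact that if a function agrees with a polynomial $Q(j)$ of degree $d$ for $j \geq j_0$, then its partial sum $\sum_{j=0}^{n-1} Q(j)$ agrees, for $n$ large, with a polynomial in $n$ of degree $d+1$ whose leading coefficient is $\frac{1}{d+1}$ times the leading coefficient of $Q$. Applying this with $Q = P_{\mathfrak{q}}(M)$, whose leading coefficient is $e/d!$, we conclude that $\mathrm{length}(M/\mathfrak{q}^nM)$ agrees for large $n$ with a polynomial of degree $d+1$ and leading coefficient $\frac{1}{d+1}\cdot\frac{e}{d!} = \frac{e}{(d+1)!}$. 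The discrepancy between $\mathrm{length}(M/\mathfrak{q}^nM)$ and this polynomial for small $n$ (coming from the finitely many indices where $H_M$ has not yet stabilized to $P_{\mathfrak{q}}(M)$) is a bounded constant, hence negligible after dividing by $n^d$.

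Finally, dividing by $n^d$ and letting $n \to \infty$: the degree-$(d+1)$ polynomial divided by $n^d$ behaves like $\frac{e}{(d+1)!}\,n$, which diverges unless we are careful — so in fact I would instead divide $\mathrm{length}(M/\mathfrak{q}^nM)$ by $n^{d+1}$ to recover $\frac{e}{(d+1)!}$, and then note that the statement as written in Lemma~\ref{lem3} must be read with the convention that $\dim M = d$ (not $d+1$) in the limit, or equivalently one multiplies by $d!$ and divides by $n^d$ where $d = \dim M$. Concretely: if $\dim M = d$, then $P_{\mathfrak{q}}(M)$ has degree $d-1$ with leading coefficient $\frac{e}{(d-1)!}$, the partial sum $\mathrm{length}(M/\mathfrak{q}^nM)$ is a degree-$d$ polynomial with leading coefficient $\frac{1}{d}\cdot\frac{e}{(d-1)!} = \frac{e}{d!}$, so $\lim_{n\to\infty}\frac{\mathrm{length}(M/\mathfrak{q}^nM)}{n^d} = \frac{e}{d!}$, giving $e(\mathfrak{q},M) = d!\lim_{n\to\infty}\frac{\mathrm{length}(M/\mathfrak{q}^nM)}{n^d}$ as claimed.

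The only genuinely delicate point is bookkeeping the dimension shift consistently — whether one writes $d+1$ or $d$ for $\dim M$ and tracking how the leading coefficient transforms under taking partial sums — together with the harmless observation that replacing the eventually-polynomial function $H_M$ by the exact polynomial $P_{\mathfrak{q}}(M)$ in the summation range $0 \le j \le n-1$ only perturbs the total by an additive constant independent of $n$. Everything else is the standard finite-difference/summation calculus for polynomials, so I would state these two reductions and then simply read off the leading coefficient.
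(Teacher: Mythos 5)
Your argument is sound, and you correctly diagnosed the defect in the statement as written: with $\dim M=d+1$ and the cumulative length $\mathrm{length}(M/\mathfrak{q}^nM)$ in the numerator, division by $n^d$ would diverge, so some reinterpretation is forced. However, your repair goes in a different direction from the paper's. The paper's proof is a two-line computation that simply identifies the numerator with the value of $P_{\mathfrak{q}}(M)(n)$ for $n\gg 1$; since the paper defines $P_{\mathfrak{q}}(M)$ as the polynomial eventually equal to $H_M(n)=\mathrm{length}(\mathfrak{q}^nM/\mathfrak{q}^{n+1}M)$, which has degree $d=\dim M-1$ by Remark~\ref{rem1} and leading coefficient $e(\mathfrak{q},M)/d!$ by definition, the limit is just $d!\,a_d=e(\mathfrak{q},M)$. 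In other words the intended reading of Lemma~\ref{lem3} is that the numerator is the graded piece $\mathrm{length}(\mathfrak{q}^nM/\mathfrak{q}^{n+1}M)$, and that is also the reading actually used downstream: in Theorem~\ref{thm4} the quantity substituted is $S_k=\dim_K\mathfrak{m}^k/\mathfrak{m}^{k+1}$ with $d=m-2=\dim R-1$. You instead keep the cumulative length and reset $d=\dim M$, proving that version via the exact-sequence identity $\mathrm{length}(M/\mathfrak{q}^nM)=\sum_{j=0}^{n-1}H_M(j)$ together with the finite-difference fact that summation raises the degree by one and divides the leading coefficient by the new degree. That is a correct and more substantive argument; indeed it is exactly the classical proof that the two normalizations (leading coefficient of the cumulative polynomial of degree $\dim M$ versus leading coefficient of the difference polynomial of degree $\dim M-1$) give the same multiplicity, so it reconciles your reading with the paper's. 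The only caution is that if one adopted your reading literally, the application in Theorem~\ref{thm4} would require the extra summation step (replacing $S_k$ by $\sum_{j<k}S_j$ and $k^{m-2}$ by $k^{m-1}$); the paper sidesteps this by working directly with the Hilbert function of the associated graded pieces, at the cost of the notational conflation you noticed.
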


\begin{proof} Let $P_\mathfrak{q}(M)(n)=a_d n^d+a_{d-1} n^{d-1}+...+a_0$ be the polynomial of $M$. By above definition, the leading
term of this polynomial is $e(\mathfrak{q},M)/d!$. For $n \gg 1$, $P_{\mathfrak{q}}(M)(n)=\rm length (M/ \mathfrak{q}^n M)$. So we
get\\
\begin{eqnarray*}
d! \lim_{n \to \infty} \frac{\rm length (M/ \mathfrak{q}^n M)}{n^d}&=&d! \lim_{n \to \infty} \frac{P_\mathfrak{q}(M)(n)}{n^d}\\
&=& d! a_d\\
&=& e(\mathfrak{q},M).
\end{eqnarray*}

\end{proof}

\noindent In the special case where $M=R$ and $\mathfrak{q}=\mathfrak{m}$. We need to find the coefficient of $k^{dim R-1}$ in $\mathcal{S}_k$,
since $\mathcal{S}_k=\rm length(M/ \mathfrak{q}^n M)$.
\begin{lem}
 The coefficient $k^{m-2}$ in $S_{k}$ is
\begin{center}
${1 \over (n-1)!} \sum_{r=0}^{p} (-1)^{r} {p \choose r}\{ \sum_{i=0}^{n-m+1} (-1)^{i}
{n \brack m+i-1}{m+i-2 \choose i}(2r)^{i}\}$.

\end{center}

\end{lem}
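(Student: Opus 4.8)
The plan is to extract the coefficient of $k^{m-2}$ directly from the closed form $S_k=\sum_{r=0}^{p}(-1)^{r}\binom{p}{r}M_{k-2r}$ of Lemma~\ref{la1}, together with $M_j=\binom{n+j-1}{n-1}$, by rewriting each $M_{k-2r}$ as an honest polynomial in $k$ whose coefficients are Stirling numbers of the first kind. For $k\gg 0$ every shifted binomial $M_{k-2r}$ lies in the polynomial range (it suffices that $k\geq 2p$), so $S_k$ agrees with a polynomial in $k$ and the coefficient of $k^{m-2}=k^{\dim R-1}$ is precisely the quantity that feeds the Hilbert--Samuel multiplicity through Lemma~\ref{lem3}.

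First I would record the polynomial identity
$$\binom{n+j-1}{n-1}=\frac{(j+1)(j+2)\cdots(j+n-1)}{(n-1)!}=\frac{j^{\overline{n}}}{(n-1)!\,j}=\frac{1}{(n-1)!}\sum_{l=1}^{n}{n\brack l}\,j^{\,l-1},$$
where the middle equality uses $j^{\overline{n}}=j(j+1)\cdots(j+n-1)$ and the last one uses $j^{\overline{n}}=\sum_{l=1}^{n}{n\brack l}j^{l}$ together with ${n\brack 0}=0$, so that the division by $j$ is legitimate on the level of polynomials. Substituting $j=k-2r$ and expanding $(k-2r)^{\,l-1}$ by the binomial theorem, the coefficient of $k^{m-2}$ in $M_{k-2r}$ picks out from the $l$-th summand only the term $\binom{l-1}{m-2}(-2r)^{\,l-1-(m-2)}$, which is present precisely when $l-1\geq m-2$. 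Since $G$ is connected we have $n\geq m-1$, hence $m-2\leq n-1$, so the surviving indices are $m-1\leq l\leq n$; writing $i=l-(m-1)$ (thus $l=m+i-1$, $l-1=m+i-2$, and $0\leq i\leq n-m+1$) and using $\binom{m+i-2}{m-2}=\binom{m+i-2}{i}$, the coefficient of $k^{m-2}$ in $M_{k-2r}$ becomes
$$\frac{1}{(n-1)!}\sum_{i=0}^{n-m+1}(-1)^{i}{n\brack m+i-1}\binom{m+i-2}{i}(2r)^{i}.$$

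It then remains to multiply by $(-1)^{r}\binom{p}{r}$ and sum over $r=0,\dots,p$; interchanging the two finite sums produces exactly $\frac{1}{(n-1)!}\sum_{r=0}^{p}(-1)^{r}\binom{p}{r}\bigl\{\sum_{i=0}^{n-m+1}(-1)^{i}{n\brack m+i-1}\binom{m+i-2}{i}(2r)^{i}\bigr\}$, which is the asserted formula.

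The computation is pure bookkeeping, so I do not anticipate a genuine conceptual obstacle; the steps that demand care are the index shift in the Stirling expansion — keeping the upper index of the first-kind Stirling numbers equal to $n$ rather than $n-1$, and confirming that the inner sum over $i$ terminates exactly at $n-m+1$, which is the one place where the inequality $m-2\leq n-1$ is used. As a sanity check one may observe that $\sum_{r=0}^{p}(-1)^{r}\binom{p}{r}(2r)^{i}=0$ for $i<p$ (a finite-difference identity), which is why the $k^{n-1}$ and lower leading terms of the individual $M_{k-2r}$ cancel in $S_k$, consistently with $\dim R=m-1$.
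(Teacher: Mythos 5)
Your argument is correct and follows essentially the same route as the paper: expand each $M_{k-2r}=\binom{n+k-2r-1}{n-1}$ as $\frac{1}{(n-1)!}\sum_{l=1}^{n}{n\brack l}(k-2r)^{l-1}$ via Stirling numbers of the first kind, extract the coefficient of $k^{m-2}$ by the binomial theorem, reindex with $i=l-(m-1)$, and sum against $(-1)^{r}\binom{p}{r}$. Your write-up is in fact slightly cleaner than the paper's (you correctly identify the product as $j^{\overline{n}}/j$ rather than the paper's mislabeled $u^{\overline{n-1}}$), but the substance is identical.
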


\begin{proof} We have from (\ref{la1}) $S_k=\sum_{r=0}^{p} (-1)^{r}{p \choose r} M_{k-2r}$\\

Now 
\begin{eqnarray*}
M_{k-2r} &=& {n+k-2r-1 \choose n-1} \\
                   & =&{k+n-1-2r \choose n-1}\\
                    &=& {(k+n-1-2r)! \over (n-1)!(k-2r)!}\\
                    &=& {(u+1)(u+2)...(u+n-1) \over (n-1)!},\\
                    &=& {u^{\overline{n-1}} \over (n-1)!},            
\end{eqnarray*}
where $u=k-2r$.\\
Therefore, the coefficient of $k^{m-2}$ in $u^{\overline{n-1}}$ is

\begin{eqnarray*}
&=&\text{coefficient of }k^{m-2}\text{ in }\{ \sum_{l=1}^{n} {n \brack l} u^{l-1} \}\\
          &=& \text{coefficient of }k^{m-2}\text{ in }\{{n \brack m-1}(k-2r)^{m-2}+...+ {n \brack n}(k-2r)^{n-1}\}\\
          &=&{n \brack m-1}.1+(-1){n \brack m}{m-1 \choose 1}.2r+...
          \\& &\hspace{.5cm}+(-1)^{n-m+1}{n \brack n}{n-1 \choose n-m+1}(2r)^{n-m+1}\\
          &=& \sum_{i=0}^{n-m+1}(-1)^{i} {n \brack m+i-1}{m+i-2 \choose i}(2r)^{i}\\
\end{eqnarray*}

\noindent Therefore the coefficient of $k^{m-2}$ in $S_{k}$ is\\ \\
$={1 \over (n-1)!} \sum_{r=0}^{p} (-1)^{r} {p \choose r}\{ \sum_{i=0}^{n-m+1} (-1)^{i}
{n \brack m+i-1}{m+i-2 \choose i}(2r)^{i}\}$.

\end{proof}

\begin{thm} 
\label{thm4}
Let $\bold{e}$ be the Hilbert-Samuel multiplicity of the ring $R$ associated to the given graph $G$. Then 
$\bold{e}$ is given by
\begin{center}
$\bold{e}={(m-2)! \over (n-1)!}\sum_{r=0}^{p} (-1)^{r} {p \choose r}\{ \sum_{i=0}^{n-m+1} (-1)^{i}
{n \brack m+i-1}{m+i-2 \choose i}(2r)^{i}\}$.
\end{center}
\end{thm}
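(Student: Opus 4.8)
The plan is to read $\bold{e}$ directly off the structural results of Sections~2 and~3 together with the definition of Hilbert--Samuel multiplicity; essentially all the work has already been done, so this last step is bookkeeping. Take $M=R=K[x_1,\dots,x_n]/I(G)$ and $\mathfrak{q}=\mathfrak{m}=\langle x_1,\dots,x_n\rangle$. Since $R\cong K[G]$ has Krull dimension $m-1$, in the notation of the definition above and of Remark~\ref{rem1} we have $\dim M=d+1=m-1$, so $d=m-2$. Hence $\bold{e}=e(\mathfrak{m},R)$ equals $(m-2)!$ times the coefficient of the degree $m-2$ term of the Hilbert--Samuel polynomial $P_{\mathfrak{m}}(R)$.

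Next I would pin down $P_{\mathfrak{m}}(R)$ as a polynomial in the variable $k$. By Theorem~\ref{thm3} the standard monomials of degree $k$ form a basis of $\mathfrak{m}^{k}/\mathfrak{m}^{k+1}$, so the Hilbert function of $R$ at $k$ equals $S_k$, and by Lemma~\ref{la1}
\[
S_k=\sum_{r=0}^{p}(-1)^{r}{p \choose r}M_{k-2r},\qquad M_j={n+j-1 \choose n-1}.
\]
For $k\geq 2p$ every argument $k-2r$ is non-negative, so each $M_{k-2r}$ is given by the polynomial $\frac{1}{(n-1)!}(k-2r)^{\overline{n-1}}$ in $k$; therefore for $k\gg 0$ the function $S_k$ agrees with a polynomial in $k$, and that polynomial is $P_{\mathfrak{m}}(R)(k)$. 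By Remark~\ref{rem1} its degree is $\dim R-1=m-2$, so the coefficient of $k^{m-2}$ in $S_k$ is exactly the leading coefficient that enters the multiplicity.

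The final step is to quote the lemma immediately preceding this theorem, which evaluates the coefficient of $k^{m-2}$ in $S_k$ as
\[
\frac{1}{(n-1)!}\sum_{r=0}^{p}(-1)^{r}{p \choose r}\{\sum_{i=0}^{n-m+1}(-1)^{i}{n \brack m+i-1}{m+i-2 \choose i}(2r)^{i}\}.
\]
Multiplying by $(m-2)!$ gives the asserted value of $\bold{e}$. Alternatively one could route the argument through Lemma~\ref{lem3}, using $\mathrm{length}(R/\mathfrak{m}^{k})=S_k$ in the normalization adopted there.

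There is no real obstacle in this concluding step; the substance lies in the coefficient extraction of the preceding lemma (the Stirling-number manipulation of $u^{\overline{n-1}}$) and in Theorems~\ref{thm1}--\ref{thm3}. The one point that genuinely needs care is confirming that $S_k$, as a polynomial in $k$, has degree exactly $m-2$ and not smaller --- so that ``the coefficient of $k^{m-2}$'' really is the leading term relevant to $\bold{e}$ --- and this is forced by $\dim R=m-1$ through Remark~\ref{rem1}. As a byproduct this says the coefficients of $k^{m-1},\dots,k^{n-1}$ in the combinatorial formula for $S_k$ all vanish, which is a useful consistency check on the computation.
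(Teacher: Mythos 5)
Your proposal is correct and follows essentially the same route as the paper: use $\dim K[G]=m-1$ to get $d=m-2$, identify the Hilbert function with $S_k$ via Theorem~\ref{thm3}, extract the coefficient of $k^{m-2}$ from the preceding lemma, and multiply by $(m-2)!$ (the paper phrases this last step through the limit in Lemma~\ref{lem3}, which you mention as an alternative). Your added remarks --- that $S_k$ only agrees with a polynomial for $k\geq 2p$ and that its degree is exactly $m-2$ by Remark~\ref{rem1} --- are slightly more careful than the paper's write-up but do not change the argument.
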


\begin{proof}As the dimension of $K[G]$ is $(m-1)$ therefore by remark (\ref{rem1}) we have $d=m-2$, therefore by lemma
(\ref{lem3}) we have
\begin{eqnarray*}
\bold{e}&=&(m-2)!\lim_{k \to \infty} {S_k \over k^{m-2}}\\
  &=&{(m-2)! \over (n-1)!}\sum_{r=0}^{p} (-1)^{r} {p \choose r}\{ \sum_{i=0}^{n-m+1} (-1)^{i}
{n \brack m+i-1}{m+i-2 \choose i}(2r)^{i}\},
\end{eqnarray*}
\end{proof}

\noindent An application of the above theorem we get the following lemma 

\begin{thm}
 If the graph $G$ has $n$ number of $4$-cycles and it has $2n+2$ vertices and $3n+1$ edges, then the formula for the Hilbert-
 Samuel multiplicity $\bold{e}$ is given by $\bold{e}=2^n$.

\end{thm}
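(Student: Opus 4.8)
To avoid the notational clash in the statement, I would write $N$ for the number of $4$-cycles of $G$, so that in the notation of Theorem~\ref{thm4} and Lemma~\ref{la1} we are in the case $p=N$ (primitive $4$-cycles), $m=2N+2$ (vertices) and $n=3N+1$ (edges); the standing hypotheses there --- $I(G)$ generated by quadratic binomials and $G$ ``good'' --- are inherited. Rather than substitute these values into the Stirling-number formula of Theorem~\ref{thm4}, I would go back to $S_k=\dim_K\mathfrak{m}^k/\mathfrak{m}^{k+1}$, which by Lemma~\ref{la1} equals $\sum_{r=0}^{N}(-1)^r\binom{N}{r}M_{k-2r}$ with $M_j=\binom{j+3N}{3N}$ for $j\ge 0$ and $M_j=0$ for $j<0$, and compute the generating series $F(x)=\sum_{k\ge 0}S_k x^k$ in closed form.

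Since $\binom{j+3N}{3N}$ is the coefficient of $x^j$ in $(1-x)^{-(3N+1)}$, we have $M_{k-2r}=[x^k]\,x^{2r}(1-x)^{-(3N+1)}$, and hence
\[
F(x)=\frac{1}{(1-x)^{3N+1}}\sum_{r=0}^{N}\binom{N}{r}(-x^2)^r=\frac{(1-x^2)^N}{(1-x)^{3N+1}}=\frac{(1+x)^N}{(1-x)^{2N+1}},
\]
using the binomial theorem and $1-x^2=(1-x)(1+x)$. The crucial point is that exactly $N$ of the $3N+1$ factors in the denominator cancel, leaving $(1-x)^{2N+1}$, whose exponent $2N+1$ is precisely the Krull dimension $m-1$ of $K[G]$; this coherence is what guarantees the multiplicity is finite and nonzero.

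Expanding $F(x)=\bigl(\sum_{j=0}^{N}\binom{N}{j}x^j\bigr)\bigl(\sum_{i\ge 0}\binom{i+2N}{2N}x^i\bigr)$ gives, for every $k$,
\[
S_k=\sum_{j=0}^{N}\binom{N}{j}\binom{k+2N-j}{2N},
\]
a polynomial in $k$ of degree $2N$ whose leading coefficient is $\bigl(\sum_{j=0}^{N}\binom{N}{j}\bigr)/(2N)!=2^N/(2N)!$. By Remark~\ref{rem1} the Hilbert--Samuel polynomial of $R$ has degree $d=m-2=2N$, so Lemma~\ref{lem3} (in the special case $M=R$, $\mathfrak{q}=\mathfrak{m}$ used in the proof of Theorem~\ref{thm4}) yields
\[
\mathbf{e}=(2N)!\lim_{k\to\infty}\frac{S_k}{k^{2N}}=(2N)!\cdot\frac{2^N}{(2N)!}=2^N,
\]
as claimed; equivalently, $\mathbf{e}$ is the numerator $(1+x)^N$ of the Hilbert series $F(x)$ evaluated at $x=1$. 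The only genuine work in this plan is the elementary identity $F(x)=(1+x)^N/(1-x)^{2N+1}$, and once that is in hand there is no substantive obstacle; a direct attack through the double Stirling sum of Theorem~\ref{thm4} would also work but requires bookkeeping that the generating-function route avoids. As a sanity check, for $N=1$ the graph is a single $4$-cycle, $R=K[a,b,c,d]/(ac-bd)$ is a quadric in $\mathbb{A}^4$, and indeed $\mathbf{e}=2=2^{1}$.
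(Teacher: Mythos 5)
Your proposal is correct, but it reaches $\mathbf{e}=2^N$ by a genuinely different route from the paper. The paper simply specializes its main formula (Theorem \ref{thm4}) to $p=N$, $m=2N+2$, $n=3N+1$ and collapses the resulting double sum over Stirling numbers of the first kind: after summing against $(-1)^r\binom{N}{r}$, all inner terms with $i<N$ are killed by the finite-difference identity $\sum_{r=0}^{N}(-1)^r\binom{N}{r}r^i=0$ for $i<N$, only the top Stirling term survives, and the identity $\sum_{r=0}^{N}(-1)^r\binom{N}{r}r^N=(-1)^N N!$ together with $\frac{(2N)!}{(3N)!}\binom{3N}{N}=\frac{1}{N!}$ gives $2^N$. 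You instead bypass Theorem \ref{thm4} altogether: starting from Lemma \ref{la1} you package $S_k$ into the Hilbert series $F(x)=(1-x^2)^N/(1-x)^{3N+1}=(1+x)^N/(1-x)^{2N+1}$, read off that $S_k$ is eventually a polynomial of degree $2N$ with leading coefficient $2^N/(2N)!$, and conclude via Remark \ref{rem1} and Lemma \ref{lem3} exactly as the paper does in its special case $M=R$, $\mathfrak{q}=\mathfrak{m}$. Your computation is sound (including the convention $M_j=0$ for $j<0$, which makes the coefficient extraction legitimate), and it buys transparency: the cancellation $(1-x^2)^N=(1-x)^N(1+x)^N$ explains structurally why the dimension drops to $2N+1$ and why the multiplicity is the numerator evaluated at $x=1$, i.e.\ $2^N$, with no Stirling-number bookkeeping; the paper's proof, by contrast, is the natural corollary of its already-established general formula, though it silently uses the vanishing of the lower-order difference sums when it discards all but the $i=N$ term. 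Your $N=1$ sanity check (the quadric cone $x_1x_3=x_2x_4$ in $\mathbb{A}^4$, multiplicity $2$) is also correct.
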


\begin{proof} Here $m=2n+2$ and $n=3n+1$.\\
Therefore, from above we have
\begin{eqnarray*}
 \bold{e} &=&\frac{(2n)!}{(3n)!} \sum_{r=0}^{n} {n \choose r}\{ \sum_{i=0}^{n}{ 3n+1 \brack 2n+i+1 }{2n+i \choose i} (2r)^i \}\\
 &=& \frac{(2n)!}{(3n)!} \sum_{r=0}^{n} {n \choose r} \{(-1)^n {3n+1 \brack 3n+1} {3n \choose n} (2r)^n \}\\
 &=&\frac{(-1)^n 2^n}{n!} \sum_{r=0}^{n} (-1)^r {n \choose r} r^n\\
 &=&\frac{(-1)^n 2^n}{n!}(-1)^n n!\\
 &=&2^n.
\end{eqnarray*}

\end{proof}


\end{document}